\newcommand{\bR}{\mathbb{R}}
\newcommand{\cH}{\mathcal{H}}
\newcommand{\cB}{\mathcal{B}}
\newcommand{\cP}{\mathcal{P}}
\newcommand{\bN}{\mathbb{N}}
\newcommand{\bE}{\mathbb{E}}
\newcommand{\mfg}{\mathfrak{g}}
\DeclareMathOperator{\Ker}{Ker}
\DeclareMathOperator{\osp}{\mathfrak{osp}}
\DeclareMathOperator{\msl}{\mathfrak{sl}}
\DeclareMathOperator{\msp}{\mathfrak{sp}}
\DeclareMathOperator{\so}{\mathfrak{so}}
\DeclareMathOperator{\spp}{\mathfrak{sp}}
\theoremstyle{plain}
\newtheorem{thm}{Theorem}
\newtheorem*{thmA}{Theorem A}
\newtheorem*{thmB}{Theorem B}
\theoremstyle{definition}
\def\ubar{\underline}
\newcommand{\pa}{\partial}
\DeclareMathOperator{\XX}{\mathbf X}
\title{Branching laws for spherical harmonics on superspaces in exceptional cases}
\author{Roman L\'{a}vi\v{c}ka\footnote{email: {\tt lavicka@karlin.mff.cuni.cz}}
}
\date{\small{
Charles University, Faculty of Mathematics and Physics,\\
Sokolovsk\'a 83, 186 75 Praha, Czech Republic}}
\begin{document}

\maketitle

{\small
\abstract{It turns out that harmonic analysis on the superspace $\bR^{m|2n}$ is quite parallel to the classical theory on
the Euclidean space $\bR^{m}$ unless the superdimension $M:=m-2n$ is even and non-positive. The underlying symmetry is given by the orthosymplectic superalgebra $\osp(m|2n)$.
In this paper, when the symmetry is reduced to $\osp(m-1|2n)$ we describe explicitly the corresponding branching laws for spherical harmonics on $\bR^{m|2n}$ also in exceptional cases, i.e, when $M-1\in-2\bN_0$. In unexceptional cases, these branching laws are well-known and quite analogous as in the Euclidean framework.
}

\medskip\noindent 
{\bf AMS Classification:} 30G35, 
                          17B10, 
													58C50. 
													\\
\noindent 
{\bf Keywords:} superspace, spherical harmonics, branching law, orthosymplectic superalgebra, indecomposable module 
}

\section{Introduction}

Lie superalgebras and their representations play an important role in mathematics and physics. For an account of their mathematical theory, see \cite{Kac, CW}. Major applications in physics are supersymmetry and supergravity, see \cite{Mar} and references there.

As a~specific application in physics, let us mention the quantum Kepler problem on the superspace $\bR^{m|2n}$ studied in \cite{Zhang}. The underlying symmetry is given by the orthosymplectic superalgebra $\osp(m|2n)$. By describing the structure of polynomials on $\bR^{m|2n}$ under a~natural action of $\osp(m|2n)$, the energy eigenvalues and the bound states spectrum for the corresponding Schr\"odinger equation are determined but only when $M:=m-2n>1$. Here $M$ is the so-called superdimension of $\bR^{m|2n}$.  
In particular, to solve the problem two key ingredients are used. The first fact is that the $\osp(m|2n)$-module of polynomials on $\bR^{m|2n}$ is completely reducible when $M>1$, and the second one is the branching law for spherical harmonics under the same condition $M>1$, see \cite[Appendix A]{Zhang}. 
In general, as we recall later on in detail, the first fact is not valid (see Theorems A and B below)
and, as we show in this paper, the general branching law is much more complicated, see Theorems 2 and 3 below.

In the framework of Clifford analysis, F. Sommen together with his collaborators has developed function theory on the superspace $\bR^{m|2n}$
which deals with polynomials or functions depending on $m$ commuting
(bosonic) and $2n$ anticommuting (fermionic) variables \cite{Cou,Cou_JPAA,CD,CDS,DES,DS3}, cf.\ \cite{LX}.  
The full symmetry is governed by the Howe dual pair $(\osp(m|2n),\msl(2))$, see \cite{Cou}. For an account of Howe dualities, we refer to \cite{Howe1,Howe2,CW,LeiS}. 
Early, it turned out that harmonic analysis on the superspace $\bR^{m|2n}$ is quite parallel to the classical theory on
the Euclidean space $\bR^{m}$ but only in the case when $M\not\in-2\bN_0$. 
In particular, when $M\not\in-2\bN_0$ 
the $\osp(m|2n)$-module of polynomials on $\bR^{m|2n}$ is completely reducible, see \cite{DS3, Cou}, generalizing the result obtained in \cite{Zhang} just for $M>1$  we mentioned above.  
Indeed, when $M\not\in-2\bN_0$, it is shown that the Fischer decomposition is an irreducible decomposition of homogeneous polynomials on $\bR^{m|2n}$ under the
action of $\osp(m|2n)$, see \cite{Cou}, cf.\ \cite{Zhang}.

It is well-known that, in the classical case (i.e., when $n=0$), a~contruction of the so-called Gelfand-Tsetlin (GT) bases of the spaces $\cH_k(\bR^m)$ of $k$-homogeneous spherical harmonics on $\bR^m$ is based on knowledge of the Fischer decompositions. Indeed, this enables to describe explicitly branching laws of the spaces $\cH_k(\bR^m)$ when the symmetry given by Lie algebra $\so(m)$ is reduced to $\so(m-1)$, and then, 
for a~given chain of Lie algebras
$$\so(m)\supset\so(m-1)\supset\cdots\supset\so(2),$$
the construction of the corresponding GT bases,  see e.g.\ \cite{Lav} for more details. 
On the other hand, in the super case, when the symmetry given by $\osp(m|2n)$ is reduced to $\osp(m-1|2n)$ branching laws for spherical harmonics on $\bR^{m|2n}$ are given in \cite{Cou}, cf.\ \cite{Zhang}, just in unexceptional cases, i.e, when $M-1\not\in-2\bN_0$.   

In \cite{LavS},  for the first time, the Fischer decomposition for scalar valued polynomials on $\bR^{m|2n}$ was obtained also in the exceptional cases, i.e, when $M\in-2\bN_0$ and $m\not=0$. In the exceptional cases, the Fischer
decomposition is rather an indecomposable (but not necessarily irreducible) decomposition of homogeneous polynomials under the
action of $\osp(m|2n)$, see Theorems A and B below. 
In the general case, the $\osp(m|2n)$-module of polynomials on $\bR^{m|2n}$ was also described recently in \cite{She}.

In this paper, when the symmetry is reduced to $\osp(m-1|2n)$ we describe branching laws for
$\osp(m|2n)$-modules of homogeneous (generalized) spherical harmonics 
on $\bR^{m|2n}$ also in the exceptional cases, i.e, when $M-1\in-2\bN_0$, see Theorems 2 and 3 below.  
To do this we apply the Fischer decompositions in the exceptional cases and the Cauchy-Kovalevskaya extension for polynomials on superspaces, see Theorem 1.
These general branching laws enable to construct GT bases of the spaces $\cH_k(\bR^{m|2n})$ of $k$-homogeneous spherical harmonics on $\bR^{m|2n}$
for a~given chain of Lie superalgebras
$$\osp(m|2n)\supset\osp(m-1|2n)\supset\cdots\supset\osp(0|2n)=\spp(2n),$$
similarly, as in the classical case, see Section \ref{s_GTB}. We plan to investigate properties of the GT bases of $\cH_k(\bR^{m|2n})$ in the next paper. 

In \cite{CD}, super Dirac operator is
introduced and the corresponding Fischer decomposition is established for spinor valued polynomials on
$\bR^{m|2n}$ but again only when $M\not\in-2\bN_0$.
We believe that it is possible to generalize the results obtained in this paper to the spinor valued polynomials on $\bR^{m|2n}$.

\section{Harmonic analysis on superspaces}


In this section, we briefly recall  some known facts from harmonic analysis on superspaces, see \cite{Cou} and \cite{LavS} for more details. 

Let $V=V_0\oplus V_1$ be a~finite dimensional superspace endowed with a~scalar superproduct $g$, 
that is, $g$ is a non-degenerate bilinear form on $V$ such that $g|_{V_0\times V_0}$ is symmetric, $g|_{V_1\times V_1}$ is skew symmetric
and $g|_{V_0\times V_1}=0=g|_{V_1\times V_0}$. We view $V$ as the defining representation for the Lie superalgebra $\mfg=\osp(V,g)$ and we identify the supersymmetric tensor algebra of $V$ with the space $\cP(V)$ of scalar-valued polynomials on $V$. There is a~natural action of $\mfg$ on the superpolynomials $\cP(V)$ and it turns out that the hidden symmetry is given by the Lie algebra $\msl(2)$ of invariant operators on $\cP(V)$ which is generated by the superlaplacian $\Delta$ and the square norm $R^2$ of the supervariable $X\in V$ (that is, $R^2=g(X,X)$).

To be more explicit, let $V=\bR^{m|2n}$ and we denote the standard coordinates of the supervariable $X\in\bR^{m|2n}$ as
$$X=(X_1,\ldots,X_{m+2n})=(x_1,\ldots,x_m,\theta_1,\ldots,\theta_{2n}).$$
Then the space $\cP=\cP(\bR^{m|2n})$ of polynomials on $\bR^{m|2n}$ is given as
$$\cP=\bR[x_1,\ldots,x_m]\otimes\Lambda(\theta_1,\ldots,\theta_{2n})$$
where $\bR[x_1,\ldots,x_m]$ are the polynomials in $m$ commuting (bosonic) variables $x_1,\ldots,x_m$ and $\Lambda(\theta_1,\ldots,\theta_{2n})$ is the Grassmann algebra generated by $2n$ anticommuting (fermionic) variables $\theta_1,\ldots,\theta_{2n}$.
We assume that the matrix of the scalar superproduct $g$ is the block diagonal matrix
$$g=(g^{ij})=
\begin{pmatrix}
E_m&\ \\
\ &J_{2n}\\
\end{pmatrix}
$$
where $E_m$ is the identity matrix of size $m$ and $J_{2n}$ is the square matrix of size $2n$ given by
$$
J_{2n}=\frac{1}{2}
\begin{pmatrix}
0&-1&&&& \\
1&0&&&\\
&&&\ddots&&\\
&&&&0&-1\\
&&&&1&0
\end{pmatrix}
$$
As usual we write $\osp(m|2n)$ for $\osp(V,g)$.

The basic $\osp(m|2n)$-invariant operators on $\cP$ are 
$$\Delta=\sum_{j=1}^{m} \pa_{x_j}^2-4\sum_{j=1}^{n}\pa_{\theta_{2j-1}}\pa_{\theta_{2j}},\ 
R^2=\sum_{j=1}^{m} {x^2_j}-\sum_{j=1}^{n}{\theta_{2j-1}}{\theta_{2j}},$$ 
$$\bE=\sum_{j=1}^{m} x_j\pa_{x_j}+\sum_{j=1}^{2n}\theta_j\pa_{\theta_j}.$$ 
Here we have the supercommutation relations $[\pa_{X_i},X_j]=\delta_{ij}$ for $1\leq i,j\leq m+2n$,
 $\Delta$ is the super Laplace operator and $\bE$ is the super Euler operator on $\bR^{m|2n}$. These operators generate the Lie algebra $\msl(2)$, that is, the relations 
$[\Delta/2, R^2/2]=\bE+M/2$, $[\Delta/2, \bE+M/2]=2\Delta/2$ and  $$[R^2/2, \bE+M/2]=-2R^2/2$$
hold true
where $M=m-2n$ is the so-called superdimension of $\bR^{m|2n}$. See \cite{DES,DS3,Cou} for details.

Using the operator $\bE$ we define the space of $k$-homogeneous polynomials by
$$\cP_k=\{P\in\cP|\ \bE P=kP\}.$$ For an operator $A$ on $\cP$, we denote
$$\Ker A=\{H\in\cP|\ A H=0\}\text{\ \ and\ \ }\Ker_k A=\Ker(A)\cap\cP_k.$$ 
Then the space $\cH_k=\cH_k(\bR^{m|2n})$ of $k$-homogeneous spherical harmonics is given by
$$\cH_k=\Ker_k \Delta.$$ 

\paragraph{The Fischer decompositions.} 
(I) In \cite[Theorem 3]{DS3}, the Fischer decomposition of polynomials on $\bR^{m|2n}$ into spherical harmonics is obtained unless  
$M\in-2\bN_0$. In this case, for each $k\in\bN_0$, we have that $\cP_k = \cH_k \oplus R^2 \cP_{k-2}$  and thus
\begin{equation}\label{e_fischer}
\cP_{k}=\bigoplus_{j=0}^{\lfloor k/2\rfloor} R^{2j}\cH_{k-2j}.
\end{equation}
Hence the Fischer decomposition looks like the classical (purely bosonic) one. In Figure \ref{Fig_I}, all the summands of $\cP_k$ are contained in the $k$-th column. Each row yields an infinite dimensional representation of $\msl(2)$.

Let us remark that, 
in the case  $m=1$, we have  $\cH_k\not=0$  if and only if $k=0,\ldots,2n+1$ (see \cite{Cou}). So in this case,  for all $k>2n+1$, the $k$-th rows in Figure \ref{Fig_I} are missing.

\begin{figure}[h]

\caption{The Fischer decomposition for $M\not\in -2\bN_0$.}\label{Fig_I}

$$\xymatrix@!C=5pt@R=1pt{
\cP_0 & \cP_1 & \cP_2 & \cP_3 & \cP_4 & \cP_5 & \cdots \\
\\
\cH_0 & &  R^2\cH_0 & & R^4\cH_0 & & \cdots \\
& \cH_1 & &  R^2\cH_1 & & R^4\cH_1  \\
& & \cH_2 & & R^2\cH_2 & & \cdots   \\
& & & \cH_3 & &  R^2\cH_3  \\
& & & & \cH_4 & & \cdots \\
& & & & & \cH_5 & & \\
& & & & & & \ddots & & \\
}$$

\end{figure}

\smallskip\noindent
(II) The purely fermionic case when $m=0$ is well-known, see e.g.\ \cite[\S\;4.2]{DES}. Actually, in this case, $\cP_k=\Lambda^k(\theta_1,\ldots,\theta_{2n})$ is the $k$-th homogeneous part of the Grassmann algebra $\Lambda(\theta_1,\ldots,\theta_{2n})$ and the Fischer decomposition is a well-known irreducible decomposition of $\Lambda^k(\theta_1,\ldots,\theta_{2n})$ under the action of $sp(2n)$. Indeed, $\cH_k=\cH_k(\bR^{0|2n})$, 
$$R^2=-\sum_{j=1}^{n}{\theta_{2j-1}}{\theta_{2j}}$$ and, for $k=0,\ldots, n,$ we have
\begin{equation}\label{e_fisher_sympl}
\begin{split}
\cP_{k}&=\bigoplus_{j=0}^{\lfloor k/2\rfloor} R^{2j}\cH_{k-2j},\\
\cP_{2n-k}&=\bigoplus_{j=0}^{\lfloor k/2\rfloor} R^{2n-2k+2j}\cH_{k-2j}.
\end{split}
\end{equation}
Hence, for $M=-2n$, the Fischer decomposition is depicted in Figure \ref{Fig_II}. In this case, the diagram has $n+1$ rows and each row forms a~finite dimensional $\msl(2)$-representation. 
In Section \ref{s_GTB} (I), we recall a~construction of bases of $\cH_k(\bR^{0|2n})$ given in \cite[\S\;4.2]{DES}.

\begin{figure}[h]

\caption{The Fischer decomposition for $M\in -2\bN_0$ and $m=0$.}\label{Fig_II}

$$\xymatrix@!C=5pt@R=1pt{
\cP_0 & \cP_1 & \cP_2 & \cP_3 & \cdots & \cdots & \cP_{2n-2} \ \ \ & \cP_{2n-1} & \cP_{2n} \\
\\
\cH_0 & &   R^2\cH_0 & & \cdots & & R^{2(n-1)}\cH_0 & & R^{2n}\cH_0 \\
&    \cH_1 & & R^2\cH_1 & & \cdots & & R^{2(n-1)}\cH_1 \\
& & \cH_2 & & \cdots & & R^{2(n-2)}\cH_2 \\
& & & \ddots & & \iddots \\
& & & & {\cH}_n\\
}$$

\end{figure}

\smallskip\noindent
(III) In \cite{LavS}, the Fischer decomposition for polynomials on $\bR^{m|2n}$ is described even in the exceptional case 
when $M\in-2\bN_0$ and $m\not=0$.
In this case, denote the set of exceptional indices by
\begin{equation}\label{IM}
I_M=\{k\in\bN_0|\ 2-M/2\leq k\leq 2-M\}.
\end{equation}
Then, for $k\in I_M$, it is known that
$\cH_k \cap R^2 \cP_{k-2}\not=\emptyset$ and so the decomposition \eqref{e_fischer} cannot be valid.
In the general case, the following decomposition holds true, see \cite{LavS}, 
$\cP_k = \tilde{\cH}_k \oplus R^2 \Delta R^2 \cP_{k-2}$
where $\tilde{\cH}_k = \Ker_k (\Delta R^2 \Delta)$. Thus we have that
\begin{equation}\label{e_superFischer}
\cP_{k}=\bigoplus_{j=0}^{\lfloor k/2\rfloor} (R^2 \Delta R^2)^j\tilde\cH_{k-2j}.
\end{equation}

Now we recall structure of $\osp(m|2n)$-modules $\tilde\cH_k$ and $\cH_k$.
Denote by $L^{m|2n}_{\lambda}$ an $\osp(m|2n)$-irreducible module with the highest weight $\lambda$. We use the simple root system of $\osp(m|2n)$ as in \cite{Cou,Zhang}. This is not the standard choice \cite{Kac} but is more convenient for our purposes.

\begin{thmA}{\rm (\cite{Cou,LavS})} Let $I_M=\{k\in\bN_0|\ 2-M/2\leq k\leq 2-M\}$ if $M\in-2\bN_0$ and $I_M=\emptyset$ otherwise. For $k\in\bN_0$, denote $\cH^0_k=\cH_k \cap R^2 \cP_{k-2}$ and $\tilde{\cH}_k = \Ker_k (\Delta R^2 \Delta)$.

\smallskip\noindent
(i) If $k\not\in I_M$ then $\tilde{\cH}_k=\cH_k\simeq L^{m|2n}_{(k,0\ldots,0)}$  and $\cH^0_k=0$. 

\smallskip\noindent
(ii) Let $k\in I_M$. Then $\cH^0_k=R^{2k+M-2}\cH_{2-M-k}$ and the indecomposable  $\osp(m|2n)$-module
$\tilde\cH_k$ has a~composition series $\cH^0_k\subset\cH_k\subset\tilde\cH_k$ with the irreducible quotients
$$\cH^0_k\simeq L^{m|2n}_{(2-M-k,0\ldots,0)},\ \ \cH_k/\cH^0_k\simeq L^{m|2n}_{(k,0\ldots,0)}, \ \ \tilde\cH_k/\cH_k\simeq L^{m|2n}_{(2-M-k,0\ldots,0)}.$$ 
Here e.g.\ $\cH_k/\cH^0_k$ is the quotient of vector spaces endowed with a~natural action of $\osp(m|2n)$.
\end{thmA}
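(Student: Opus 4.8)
The plan is to exploit the Howe duality between $\osp(m|2n)$ and the $\msl(2)$ generated by $\Delta$, $R^2$ and $\bE$: these operators commute with the $\osp(m|2n)$-action, so $\cP$ is a module for $\osp(m|2n)\times\msl(2)$ and the internal structure of $\cH_k$, $\tilde\cH_k$ and $\cH^0_k$ is governed by the $\msl(2)$-action on the multiplicity space of a fixed $\osp(m|2n)$-type. The computational core is the intertwining identity
$$\Delta R^{2j}H=2j\,(2k+2j+M-2)\,R^{2(j-1)}H,\qquad H\in\cH_k,$$
proved by induction from $[\Delta,R^2]=4\bE+2M$. Its coefficient vanishes exactly when $k+j=1-M/2$, so a power $R^{2j}\cH_{k'}$ can become harmonic at most once; bookkeeping with $k'=k-2j\ge 0$ and $j\ge 1$ then shows $\cH_k\cap R^2\cP_{k-2}\neq 0$ precisely for $k\in I_M$, with unique resonant step $j_0=k+M/2-1$ and $\cH^0_k=R^{2j_0}\cH_{2-M-k}=R^{2k+M-2}\cH_{2-M-k}$.

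A single observation drives both parts: if $Q\in\cP_{k-2}$ and $\Delta R^2Q=0$ then $R^2Q\in R^2\cP_{k-2}\cap\cH_k=\cH^0_k$, so, $R^2$ being injective for $m\ge 1$, one has $\Ker(\Delta R^2)\cap\cP_{k-2}=(R^2)^{-1}\cH^0_k$. For $k\notin I_M$ we have $\cH^0_k=0$; hence this kernel is $0$, $\Delta R^2\Delta P=0$ forces $\Delta P=0$, and $\tilde\cH_k=\cH_k$. Together with the Fischer decomposition \eqref{e_fischer} and the irreducibility $\cH_k\simeq L^{m|2n}_{(k,0\ldots,0)}$ of \cite{Cou} — whose highest-weight vector is the harmonic power $z^k$ (with $z=x_1+ix_2$ for $m\ge 2$; the case $m=1$ is covered by the explicit description in \cite{Cou}) — this is part (i).

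For $k\in I_M$ the same kernel is $(R^2)^{-1}\cH^0_k=R^{2(j_0-1)}\cH_{2-M-k}$, and I assemble the filtration from the equivariant maps $R^2$ and $\Delta$. Since $2-M-k\notin I_M$, part (i) gives $\cH_{2-M-k}\simeq L^{m|2n}_{(2-M-k,0\ldots,0)}$ irreducible; as $R^{2j_0}$ is equivariant with nonzero image $\cH^0_k$, Schur's lemma makes it an isomorphism, so $\cH^0_k\simeq L^{m|2n}_{(2-M-k,0\ldots,0)}$. The harmonic $z^k$ is not divisible by $R^2$, hence lies outside $\cH^0_k$ and yields $\cH_k/\cH^0_k\simeq L^{m|2n}_{(k,0\ldots,0)}$. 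Finally $\Delta$ induces $\tilde\cH_k/\cH_k\xrightarrow{\ \sim\ }\Delta\tilde\cH_k\subseteq R^{2(j_0-1)}\cH_{2-M-k}$, and since $\Delta\colon\cP_k\to\cP_{k-2}$ is surjective — solve $\Delta P=Q$ by descending induction on the $\theta$-degree, using surjectivity of the bosonic Laplacian on homogeneous polynomials — the inclusion is an equality and $\tilde\cH_k/\cH_k\simeq L^{m|2n}_{(2-M-k,0\ldots,0)}$.

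The main obstacle is indecomposability, i.e. proving that neither of the extensions $0\to\cH^0_k\to\cH_k\to L^{m|2n}_{(k,0\ldots,0)}\to 0$ and $0\to\cH_k\to\tilde\cH_k\to L^{m|2n}_{(2-M-k,0\ldots,0)}\to 0$ splits; equivalently, that the socle of $\tilde\cH_k$ is the single copy $\cH^0_k$. A simple submodule of type $L^{m|2n}_{(k,0\ldots,0)}$ cannot escape $\cH_k$ — on it $\Delta$ would be a nonzero equivariant map into $L^{m|2n}_{(2-M-k,0\ldots,0)}$, which is impossible — so any splitting produces a genuine $\osp(m|2n)$-complement. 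I would rule this out by showing that $\cH^0_k$ is exactly the radical of the $\osp(m|2n)$-invariant Fischer form on $\cH_k$: the form is $\msl(2)$-contravariant, so its degeneration is forced precisely by the resonance at step $j_0$ and admits no invariant complement; alternatively one exhibits a lowering operator of $\osp(m|2n)$ carrying $z^k$ into $\cH^0_k$, so that $z^k$ already generates all of $\cH_k$. The delicate feature throughout is that in the exceptional case $\cP_k$ is not semisimple, the two copies of $L^{m|2n}_{(2-M-k,0\ldots,0)}$ and the copy of $L^{m|2n}_{(k,0\ldots,0)}$ being glued through the single resonance; the resulting list of composition factors can be cross-checked against the super Fischer decomposition \eqref{e_superFischer} of \cite{LavS}.
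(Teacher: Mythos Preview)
The paper does not prove Theorem~A: it is quoted from \cite{Cou,LavS} as background, with no argument supplied. There is therefore no ``paper's own proof'' to compare against.

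Your outline is the right one and matches what is done in those references: exploit the $\msl(2)$-identity $\Delta R^{2j}H=2j(2k+2j+M-2)R^{2(j-1)}H$ to locate the unique resonance, then use the equivariant maps $R^2$ and $\Delta$ to read off the composition factors. But several load-bearing steps are only asserted. First, your ``bookkeeping'' shows $R^{2k+M-2}\cH_{2-M-k}\subseteq\cH^0_k$, not equality: knowing which towers $R^{2j}\cH_{k'}$ land in $\cH_k$ does not by itself show that every harmonic in $R^2\cP_{k-2}$ arises this way, because in the exceptional range you cannot invoke the ordinary Fischer decomposition of $\cP_{k-2}$ into such towers. Your identity $\Ker_{k-2}(\Delta R^2)=(R^2)^{-1}\cH^0_k$ is correct but tautological here; the actual computation of this kernel is one of the lemmas proved in \cite{LavS}. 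Second, producing the highest-weight vector $z^k\notin\cH^0_k$ gives a nonzero vector of the correct weight in $\cH_k/\cH^0_k$ but not irreducibility of that quotient; this is a theorem in \cite{Cou} requiring genuine $\osp(m|2n)$-input. Third, you rightly flag indecomposability as the hard point and sketch two plausible attacks (degeneracy of the Fischer form, or an explicit lowering operator carrying $z^k$ into $\cH^0_k$); the second is close to what \cite{LavS} actually does, but neither is executed. In short: correct strategy, but the equalities and the non-splitting are precisely the content of the cited papers rather than consequences of the resonance calculation alone.
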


Let us note that 
some direct summands of the decomposition \eqref{e_superFischer} might be trivial. Indeed, we have

\begin{thmB}{\rm (\cite{LavS})}\label{cor_Fischer}  Let $k\in\bN_0$,
$N_k=\{k-2j|\ j=0,\ldots,\lfloor k/2\rfloor\}$ and $\tilde J_k=N_k\cap I_M$. 
Then, under the action of $\osp(m|2n)$, $\cP_k$ has an indecomposable decomposition
\begin{equation}\label{eq_Fischer+}
\cP_{k}=\bigoplus_{\ell\in \tilde J_k} R^{k-\ell}\tilde\cH_{\ell} \oplus
\bigoplus_{\ell\in J_k} R^{k-\ell}\cH_{\ell} 
\end{equation}
where $J_k=N_k\setminus(\tilde J_k\cup J_k^0)$ with $J_k^0=\{2-M-\ell|\ \ell\in\tilde J_k\}$. 
\end{thmB}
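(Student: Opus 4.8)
The plan is to read the claimed decomposition straight off the super Fischer decomposition \eqref{e_superFischer}, reindexed as $\cP_k=\bigoplus_{\ell\in N_k}W_\ell$ with $W_\ell:=(R^2\Delta R^2)^{(k-\ell)/2}\tilde\cH_\ell$, by evaluating each $W_\ell$ as one of the summands in \eqref{eq_Fischer+} or as $0$. In the case at hand $m\neq0$, so the first step I would record is that multiplication by $R^2$ is injective on $\cP$: filtering by fermionic degree, the top component of $R^2P$ is $\big(\sum_j x_j^2\big)$ times the top component of $P$, and $\sum_j x_j^2$ is a non-zero-divisor in $\bR[x_1,\dots,x_m]$ for $m\ge1$. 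Hence every $R^{k-\ell}$ is an injective homomorphism of $\osp(m|2n)$-modules, so each summand $R^{k-\ell}\tilde\cH_\ell$ (resp.\ $R^{k-\ell}\cH_\ell$) is isomorphic to $\tilde\cH_\ell$ (resp.\ $\cH_\ell$) and is therefore indecomposable by Theorem A; for $\ell\in J_k$ one moreover has $\ell\notin I_M$, so $\cH_\ell\simeq L^{m|2n}_{(\ell,0\dots,0)}$ is irreducible.

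The computational heart is the operator identity on $\cP_\ell$,
\begin{equation*}
(R^2\Delta R^2)^{j}=R^{2j}\prod_{i=1}^{j}\big(R^2\Delta+\mu_i\big),\qquad \mu_i=2i\,(2\ell+M+2i-2),
\end{equation*}
which I would prove by induction on $j$, using only $[\Delta,R^2]=4\bE+2M$ (equivalently the $\msl(2)$-relations) and the resulting formula $\Delta R^{2a}=R^{2a}\Delta+2a\,R^{2a-2}(2\bE+M+2a-2)$. All factors are polynomials in the single operator $R^2\Delta\colon\cP_\ell\to\cP_\ell$, hence commute, and $\mu_i=0$ precisely when $i=1-\ell-M/2$.

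With $j=(k-\ell)/2$ I would then run three cases for $\ell\in N_k$. If $\ell\in J_k$ then $\ell\notin I_M$, so $\tilde\cH_\ell=\cH_\ell$ is annihilated by $\Delta$ and the identity gives $W_\ell=\big(\prod_{i=1}^j\mu_i\big)R^{2j}\cH_\ell$; a vanishing factor would force $1-\ell-M/2\in\{1,\dots,j\}$, i.e.\ $\ell\le-M/2$ and the partner $2-M-\ell\le k$, whence $2-M-\ell\in N_k\cap I_M=\tilde J_k$ and $\ell\in J_k^0$, which is excluded, so $W_\ell=R^{k-\ell}\cH_\ell$. If $\ell\in J_k^0\cap N_k$ then again $\tilde\cH_\ell=\cH_\ell$, but now $\ell=2-M-\ell^\ast$ with $\ell^\ast\in\tilde J_k\subseteq N_k$ forces $1-\ell-M/2\in\{1,\dots,j\}$, so some $\mu_i=0$ and $W_\ell=0$. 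Finally, if $\ell\in\tilde J_k\subseteq I_M$ then $2\ell+M>0$, so every $\mu_i>0$; on $\tilde\cH_\ell=\Ker_\ell(\Delta R^2\Delta)$ one has $(R^2\Delta)^2=R^2(\Delta R^2\Delta)=0$, and $R^2\Delta$ preserves $\tilde\cH_\ell$ because $\Delta R^2=R^2\Delta+2(2\ell+M)$ does, so each factor $R^2\Delta+\mu_i$ is (nilpotent)$+$(nonzero scalar) and hence invertible on $\tilde\cH_\ell$; thus $\prod_i(R^2\Delta+\mu_i)$ maps $\tilde\cH_\ell$ onto itself and $W_\ell=R^{2j}\tilde\cH_\ell=R^{k-\ell}\tilde\cH_\ell$. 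Substituting these evaluations into $\cP_k=\bigoplus_{\ell\in N_k}W_\ell$ collapses the $J_k^0$ terms to zero and yields exactly \eqref{eq_Fischer+}, the directness being inherited from \eqref{e_superFischer} and indecomposability coming from the first paragraph.

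The main obstacle I expect is the exceptional branch $\ell\in\tilde J_k$: one must show $W_\ell$ fills out all of $R^{k-\ell}\tilde\cH_\ell$ rather than a proper submodule. This is exactly where the structure in Theorem A is indispensable — the nilpotency $(R^2\Delta)^2=0$ is equivalent to $\tilde\cH_\ell=\Ker(\Delta R^2\Delta)$, and the positivity $\mu_i>0$ holds precisely because $\ell\in I_M$ gives $2\ell+M>0$; together they make $\prod_i(R^2\Delta+\mu_i)$ invertible on $\tilde\cH_\ell$ even though it is very far from invertible on $\cP_\ell$. The accompanying bookkeeping — that $\prod_i\mu_i$ vanishes exactly for $\ell\in J_k^0$, so the redundant summands disappear and the index set reduces to $\tilde J_k\cup J_k=N_k\setminus J_k^0$ — is the combinatorial shadow of the representation-theoretic collapse $\cH^0_\ell=R^{2\ell+M-2}\cH_{2-M-\ell}$ from Theorem A(ii), which it matches to the vanishing of $W_{2-M-\ell}$.
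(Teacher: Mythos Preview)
The paper does not actually prove Theorem~B; it is stated with attribution to \cite{LavS} and no argument is given here. So there is no ``paper's proof'' to compare against, and your task is really to derive the refined form \eqref{eq_Fischer+} from the raw decomposition \eqref{e_superFischer} together with Theorem~A. Your proposal does precisely that, and the argument is correct.

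Your operator identity $(R^2\Delta R^2)^j=R^{2j}\prod_{i=1}^j(R^2\Delta+\mu_i)$ on $\cP_\ell$, with $\mu_i=2i(2\ell+M+2i-2)$, follows cleanly by induction from $\Delta R^{2a}=R^{2a}\Delta+2aR^{2a-2}(2\bE+M+2a-2)$, and the three-case analysis (non-exceptional $\ell$ with nonvanishing product, the $J_k^0$ case where some $\mu_i=0$, and $\ell\in I_M$ where nilpotency of $R^2\Delta$ on $\tilde\cH_\ell$ plus $\mu_i>0$ gives invertibility) is exactly the right trichotomy. The bookkeeping showing that the vanishing summands are precisely those indexed by $J_k^0$, and that $J_k^0\subset N_k\setminus I_M$, is also correct.

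One small imprecision: in your injectivity argument for $R^2$ when $m\ge1$, the phrase ``top component'' in the fermionic filtration is ambiguous. The clean statement is that the \emph{lowest} fermionic-degree component of $R^2P$ equals $\big(\sum_j x_j^2\big)$ times the lowest fermionic-degree component of $P$, since the Grassmann part of $R^2$ strictly raises fermionic degree. With that reading the argument goes through; alternatively you can simply cite this injectivity as a known fact (it is used freely in \cite{LavS} and \cite{Cou}).
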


The particular case $M=-4$ is depicted in Figure \ref{Fig_III}. The exceptional indices are $I_{-4}=\{4,5,6\}$.
Notice that the first three rows look like the diagram for the purely fermionic Fischer decomposition  with $n=2$ (see Figure \ref{Fig_II}).
Other rows are infinite as in the classical case (see Figure \ref{Fig_I}). The $k$-th row starts with $\cH_k$ except for the exceptional indices $k\in I_{-4}$ when it starts with ${\tilde{\cH}_k}$.

\begin{figure}[h]

\caption{The Fischer decomposition for $M=-4$ and $m\not=0$.}
\label{Fig_III}

$$\xymatrix@!C=5pt@R=1pt{
\cP_0 & \cP_1 & \cP_2 & \cP_3 & \cP_4 & \cP_5 & \cP_6 & \cP_7 & \cP_8 & \cdots\\
\\
\cH_0 & &  R^2\cH_0 & & R^4\cH_0 & & 0 & &  \\
& \cH_1 & & R^2\cH_1 & & 0 & &  & \\
& & \cH_2 & & 0 & &  & &  \\
& & & \cH_3 & & R^2\cH_3 & & R^4\cH_3 & & \cdots\\
& & & & {\tilde{\cH}_4} & &  {R^2\tilde{\cH}_4} & & {R^4\tilde{\cH}_4} \\
& & & & & {\tilde{\cH}_5} & & {R^2\tilde{\cH}_5} & & \cdots\\
& & & & & & {\tilde{\cH}_6} & & {R^2\tilde{\cH}_6}\\
& & & & & & & \cH_7 & & \cdots\\
& & & & & & & & \cH_8 \\
& & & & & & & & & \ddots
}$$
\end{figure}

\section{The Cauchy-Kovalevskaya Extension}\label{s_CK}

For an explicit description of branching laws for (generalized) spherical harmonics on $\bR^{m|2n}$ when the symmetry is restricted from $\osp(m|2n)$ to $\osp(m-1|2n)$ we need to generalize an algebraic version of the so-called Cauchy-Kovalevskaya extension to the super setting. In particular, we show that each polynomial $Q_k$ on $\bR^{m|2n}$ is uniquely determined by its laplacian $\Delta Q_k$ and the values of $Q_k$ and $\pa_{x_m} Q_k$ on the hyperplane $x_m=0$. We identify the hyperplane $x_m=0$ with $\bR^{m-1|2n}$ and write $\ubar\cP=\cP(\bR^{m-1|2n})$ for the polynomials in the supervariable  $$\ubar X=(x_1,x_2,\ldots,x_{m-1},\theta_1,\theta_2,\ldots, \theta_{2n})\in\bR^{m-1|2n}.$$
Indeed, we have the following result.

\begin{thm}\label{t_CK} 
\begin{itemize}
\item[(i)] For each $p_{k}\in\ubar\cP_{k}$, $p_{k-1}\in\ubar\cP_{k-1}$, $P_{k-2}\in\cP_{k-2}$, there is a~unique $Q_k\in\cP_k$ such that
\begin{equation}\label{IVP}
\Delta Q_k=P_{k-2},\ \ (Q_k|_{x_m=0})=p_k,\ \ (\pa_{x_m} Q_k|_{x_m=0})=p_{k-1}.
\end{equation}
Put $CK(p_k,p_{k-1},P_{k-2}):=Q_k$.
\item[(ii)] Then the Cauchy-Kovalevskaya extension operator $CK$ is  an invariant isomorphism of $\ubar\cP_{k}\oplus\ubar\cP_{k-1}\oplus\cP_{k-2}$ onto $\cP_k$ under the action of $\osp(m-1|2n)$.
In addition, we have
\begin{equation}\label{eq-CK}
CK(p_k,p_{k-1},P_{k-2})=\sum^k_{\ell=0} \XX_{\ell} p_{k-\ell}
\end{equation}
where $p_{k-2-\ell}=(\pa^{\ell}_{x_m} P_{k-2})|_{x_m=0}$ for $\ell=0,\ldots,k-2$ and
\begin{equation}\label{eq-Xl}
\XX_{\ell}=\sum_{j=0}^{\infty} \frac{x_m^{2j+\ell}}{(2j+\ell)!}(-\ubar\Delta)^j.
\end{equation}
Here $\ubar\Delta$ is the Laplace operator on $\bR^{m-1|2n}$.
\end{itemize}
\end{thm}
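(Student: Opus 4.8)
The plan is to exploit the splitting $\Delta=\pa_{x_m}^2+\ubar\Delta$ of the super Laplacian, valid because $\ubar\Delta$ is the super Laplacian on $\bR^{m-1|2n}$ and $x_m$ is a bosonic variable disjoint from the $\bR^{m-1|2n}$ coordinates. Since $x_m$ commutes with everything in $\ubar\cP$, every $Q_k\in\cP_k$ has a unique finite expansion $Q_k=\sum_{\ell=0}^k x_m^\ell\,q_{k-\ell}$ with $q_{k-\ell}\in\ubar\cP_{k-\ell}$, the range $0\le\ell\le k$ being forced by $k$-homogeneity. The conditions \eqref{IVP} then translate into conditions on the coefficients $q_{k-\ell}$, which is how I would reduce the whole statement to a recurrence.

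For part (i), reading off the lowest two Taylor coefficients gives $q_k=p_k$ and $q_{k-1}=p_{k-1}$, so the two boundary conditions fix the top of the expansion. Substituting into $\Delta Q_k=\pa_{x_m}^2Q_k+\ubar\Delta Q_k$ and matching the coefficient of $x_m^\ell$ yields the two-step recurrence $(\ell+1)(\ell+2)\,q_{k-\ell-2}+\ubar\Delta\,q_{k-\ell}=c_\ell$, where $c_\ell$ is the $\ell$-th Taylor coefficient of $P_{k-2}$ in $x_m$. This determines $q_{k-2},q_{k-3},\dots,q_0$ uniquely from $q_k,q_{k-1}$ and $P_{k-2}$, and the degree bookkeeping shows each $q_{k-\ell-2}$ lands in $\ubar\cP_{k-\ell-2}$, so $Q_k\in\cP_k$. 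The coefficients of $x_m^{k-1}$ and $x_m^k$ in $\Delta Q_k$ vanish automatically (they would live in $\ubar\cP_{-1}$ and $\ubar\cP_{-2}$), matching the fact that $P_{k-2}$ has $x_m$-degree at most $k-2$; hence the recurrence is consistent with no over-determination, and existence and uniqueness of $Q_k$ follow.

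For part (ii), the map $Q_k\mapsto(Q_k|_{x_m=0},\,\pa_{x_m}Q_k|_{x_m=0},\,\Delta Q_k)$ is a two-sided inverse of $CK$ by part (i), so $CK$ is a linear isomorphism. Invariance is the observation that $\osp(m-1|2n)$ fixes $x_m$, hence commutes with $\pa_{x_m}$ and with restriction to $x_m=0$, while $\Delta=\pa_{x_m}^2+\ubar\Delta$ is $\osp(m-1|2n)$-invariant because $\ubar\Delta$ is the invariant super Laplacian on $\bR^{m-1|2n}$; thus all three defining operations are equivariant and so is $CK$. To establish the explicit formula \eqref{eq-CK}, I would verify directly that $\Phi:=\sum_{\ell=0}^k\XX_\ell\,p_{k-\ell}$ satisfies the three conditions \eqref{IVP} and invoke the uniqueness just proved. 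Each $\XX_\ell p_{k-\ell}$ is a genuine element of $\cP_k$: applied to a $(k-\ell)$-homogeneous polynomial the series \eqref{eq-Xl} truncates, and the power count $x_m^{2j+\ell}\cdot\ubar\cP_{k-\ell-2j}$ gives total degree $k$. The boundary conditions are immediate since the lowest $x_m$-power of $\XX_\ell$ is $x_m^\ell$, giving $\XX_0|_{x_m=0}=\Id$ and $\pa_{x_m}\XX_1|_{x_m=0}=\Id$ while all other terms drop out at $x_m=0$.

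The heart of the computation is the operator identities $\pa_{x_m}^2\XX_\ell=\XX_{\ell-2}$ for $\ell\ge2$ and $\pa_{x_m}^2\XX_\ell=-\ubar\Delta\,\XX_\ell$ for $\ell\in\{0,1\}$, together with $\ubar\Delta\,\XX_\ell=\XX_\ell\,\ubar\Delta$. These give $\Delta(\XX_\ell p_{k-\ell})=0$ for $\ell\in\{0,1\}$ and $\Delta(\XX_\ell p_{k-\ell})=\XX_{\ell-2}p_{k-\ell}+\XX_\ell\,\ubar\Delta\,p_{k-\ell}$ for $\ell\ge2$. Summing over $\ell$ and reindexing, the second sum reproduces, up to sign, the $j\ge1$ part of the first, so the two cancel except for the $j=0$ term, leaving $\sum_{\ell=0}^{k-2}\frac{x_m^\ell}{\ell!}p_{k-2-\ell}=P_{k-2}$, where I use the normalization $p_{k-2-\ell}=(\pa_{x_m}^\ell P_{k-2})|_{x_m=0}$. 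I expect this telescoping step to be the main obstacle: it requires carefully reindexing the double sums over $\ell$ and $j$ and tracking the factorial weights of \eqref{eq-Xl} against the chosen normalization of the $p_{k-2-\ell}$. Everything else is routine once the splitting $\Delta=\pa_{x_m}^2+\ubar\Delta$ and the recurrence of part (i) are in hand.
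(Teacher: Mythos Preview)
Your argument is correct and follows essentially the same line as the paper: expand $Q_k$ in powers of $x_m$, use $\Delta=\pa_{x_m}^2+\ubar\Delta$ to turn \eqref{IVP} into the two-step recursion determining all coefficients from $p_k,p_{k-1}$ and the Taylor coefficients of $P_{k-2}$, and then identify the closed form \eqref{eq-CK}. The only organizational difference is in how you obtain the explicit formula. The paper simply observes that when all inputs vanish except a single $p_{k-\ell}$, the recursion $q_{k-2-j}=-\ubar\Delta\,q_{k-j}+p_{k-2-j}$ immediately gives $q_{k-\ell-2i}=(-\ubar\Delta)^i p_{k-\ell}$ and zero on the other parity, i.e.\ $Q_k=\XX_\ell p_{k-\ell}$; linearity then yields \eqref{eq-CK}. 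You instead verify a posteriori that $\sum_\ell \XX_\ell p_{k-\ell}$ satisfies \eqref{IVP} via the operator identities $\pa_{x_m}^2\XX_\ell=\XX_{\ell-2}$ ($\ell\ge 2$) and $\pa_{x_m}^2\XX_\ell=-\ubar\Delta\,\XX_\ell$ ($\ell=0,1$), followed by a telescoping sum and an appeal to uniqueness. Both routes are valid; the paper's avoids the telescoping computation because the recursion already encodes $\Delta Q_k=P_{k-2}$, so there is nothing extra to check. You also spell out the $\osp(m-1|2n)$-equivariance of $CK$ (via equivariance of its inverse), which the paper's proof leaves implicit.
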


\begin{proof}
(a) It is clear that we can write polynomials $Q_k\in\cP_k$ and $P_{k-2}\in\cP_{k-2}$ in the form
\begin{equation}\label{eq-Qk}
Q_k(X)=\sum_{j=0}^k \frac{x_m^j}{j!} q_{k-j}(\ubar X)\text{\ \ and\ \ }
P_{k-2}(X)=\sum_{j=0}^{k-2} \frac{x_m^j}{j!} p_{k-2-j}(\ubar X)
\end{equation}
for some polynomials $p_j,q_j\in\ubar \cP_j$.
Then $Q_k$ is a~unique solution of the initial value problem \eqref{IVP} if and only if 
$q_k=p_k$, $q_{k-1}=p_{k-1}$ and
\begin{equation}\label{IVP_rel}
 q_{k-2-j}=-\ubar\Delta q_{k-j}+p_{k-2-j} \text{\ for\ }j=0,\ldots,k-2.
\end{equation}
Indeed, we have that 
\begin{equation}\label{eq-DQk}
\Delta Q_k=(\pa^2_{x_m}+\ubar\Delta)Q_k=\sum_{j=2}^k \frac{x_m^{j-2}}{(j-2)!} q_{k-j}+\sum_{j=0}^{k-2} \frac{x_m^j}{j!} \ubar\Delta q_{k-j}.
\end{equation}
Then the equation $\Delta Q_k=P_{k-2}$ is equivalent to the recursion relation \eqref{IVP_rel} by comparing terms with the same powers $x_m^j$ 
in the expansions of $P_{k-2}$ and $\Delta Q_k$, see \eqref{eq-Qk} and \eqref{eq-DQk}.

\noindent 
(b) For a~given $\ell=0,\ldots,k$, assume that all the initial data $p_{k-j}$ vanish except possibly for $p_{k-\ell}$. 
Then, by  \eqref{IVP_rel}, it is easy to see that the solution $Q_k$ of the problem \eqref{IVP} is given by $Q_k=\XX_{\ell} p_{k-\ell}$.
Here the operator $\XX_{\ell}$ is defined as in \eqref{eq-Xl}. To finish the proof of the general case of the statement (ii), we use linearity of the problem \eqref{IVP}.
\end{proof}

\section{Branching laws for spherical harmonics}\label{s_BL}

 K. Coulembier \cite{Cou} (cf.\ \cite{Zhang}) showed that, under the action of $\osp(m-1|2n)$, the space  $\cH_k=\cH_k(\bR^{m|2n})$ of spherical harmonics decomposes as
\begin{equation}\label{eq_branch} 
\cH_{k}\simeq\bigoplus_{\ell=0}^k \ubar\cH_{\ell}
\end{equation}
provided the superdimension $\ubar M:=m-1-2n$ of $\bR^{m-1|2n}$ satisfies $\ubar M\not\in-2\bN_0$.
Here $\ubar\cH_{\ell}=\cH_{\ell}(\bR^{m-1|2n})$.
We extend these branching laws also to the exceptional cases.

\begin{thm}
Under the action of $\osp(m-1|2n)$, we have an indecomposable decomposition
$$\cH_{k}\simeq\bigoplus_{\ell\in\tilde B_k} \ubar{\tilde\cH}_{\ell} \oplus
\bigoplus_{\ell\in B_k} \ubar\cH_{\ell} $$
where $\tilde B_k=\{0,1,\ldots,k\}\cap I_{M-1}$ and $B_k=\{0,1,\ldots,k\}\setminus (\tilde B_k\cup B^0_k)$\\
with $B^0_k=\{3-M-\ell|\ \ell\in\tilde B_k\}$. Here $\ubar{\tilde\cH}_{\ell}=\tilde\cH_{\ell}(\bR^{m-1|2n})$.
\end{thm}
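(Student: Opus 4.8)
The plan is to reduce the branching problem to the Fischer decomposition on $\bR^{m-1|2n}$ by means of the Cauchy--Kovalevskaya extension of Theorem~\ref{t_CK}. First I would observe that, under the $\osp(m-1|2n)$-invariant isomorphism $CK\colon\ubar\cP_k\oplus\ubar\cP_{k-1}\oplus\cP_{k-2}\to\cP_k$, a polynomial $Q_k=CK(p_k,p_{k-1},P_{k-2})$ is harmonic precisely when $\Delta Q_k=P_{k-2}=0$. Restricting $CK$ to the slice $P_{k-2}=0$ therefore yields an isomorphism of $\osp(m-1|2n)$-modules
$$\cH_k\;\simeq\;\ubar\cP_k\oplus\ubar\cP_{k-1}.$$
In the unexceptional case each $\ubar\cP_j$ equals $\bigoplus_{\ell\in N_j}\ubar\cH_\ell$, and since $N_k\cup N_{k-1}=\{0,\dots,k\}$ this already reproduces Coulembier's law \eqref{eq_branch}; the point is to carry the same reduction through the exceptional decomposition.

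Next I would decompose each factor by applying Theorem~B to $\bR^{m-1|2n}$, whose superdimension is $\ubar M=M-1$, so that the relevant exceptional set is $I_{\ubar M}=I_{M-1}$. Because the square-norm $R$ on $\bR^{m-1|2n}$ is $\osp(m-1|2n)$-invariant, every nonzero summand $R^{j-\ell}\ubar{\tilde\cH}_\ell$ (resp.\ $R^{j-\ell}\ubar\cH_\ell$) occurring in Theorem~B is isomorphic, as an $\osp(m-1|2n)$-module, to $\ubar{\tilde\cH}_\ell$ (resp.\ $\ubar\cH_\ell$). Writing $\tilde J_j=N_j\cap I_{M-1}$, $J^0_j=\{3-M-\ell\mid\ell\in\tilde J_j\}$ and $J_j=N_j\setminus(\tilde J_j\cup J^0_j)$ as in Theorem~B, I obtain for $j\in\{k-1,k\}$
$$\ubar\cP_j\;\simeq\;\bigoplus_{\ell\in\tilde J_j}\ubar{\tilde\cH}_\ell\;\oplus\;\bigoplus_{\ell\in J_j}\ubar\cH_\ell.$$

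It then remains to assemble the two decompositions and to identify the index sets with $\tilde B_k$, $B_k$ and $B^0_k$. The key elementary facts are that $N_k$ and $N_{k-1}$ are disjoint --- they collect the indices of $\{0,\dots,k\}$ of parity $k$ and $k-1$ respectively --- and that their union is $\{0,\dots,k\}$. Hence $\tilde J_k\cup\tilde J_{k-1}=\{0,\dots,k\}\cap I_{M-1}=\tilde B_k$ and $J^0_k\cup J^0_{k-1}=\{3-M-\ell\mid\ell\in\tilde B_k\}=B^0_k$, so the summands $\ubar{\tilde\cH}_\ell$ appear exactly for $\ell\in\tilde B_k$, each at most once. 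The genuinely delicate point, and the main obstacle, is the parity bookkeeping needed to pin down the surviving irreducible summands. Since $M$ is odd whenever $M-1\in-2\bN_0$, the integer $3-M$ is even, so each $J^0_j$ consists of integers of the same parity as $j$; thus $J^0_k\cap\{0,\dots,k\}\subset N_k$ and $J^0_{k-1}\cap\{0,\dots,k\}\subset N_{k-1}$, which gives $B^0_k\cap\{0,\dots,k\}=(J^0_k\cap N_k)\cup(J^0_{k-1}\cap N_{k-1})$. Removing $\tilde B_k$ and these sets from $\{0,\dots,k\}$ then collapses $J_k\cup J_{k-1}$ to exactly $\{0,\dots,k\}\setminus(\tilde B_k\cup B^0_k)=B_k$.

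Finally, I would verify that the decomposition is indecomposable: each $\ubar{\tilde\cH}_\ell$ with $\ell\in\tilde B_k\subset I_{M-1}$ is indecomposable by Theorem~A(ii), while for $\ell\in B_k$ one has $\ell\notin I_{M-1}$ (as $B_k\cap\tilde B_k=\emptyset$), so $\ubar\cH_\ell$ is irreducible by Theorem~A(i). Everything beyond the parity argument is a direct consequence of Theorems~\ref{t_CK} and~B together with the invariance of $R$.
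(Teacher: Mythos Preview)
Your proof is correct and follows exactly the paper's route: identify $\cH_k\simeq\ubar\cP_k\oplus\ubar\cP_{k-1}$ via the CK extension, then apply Theorem~B on $\bR^{m-1|2n}$ to each summand and use that multiplication by $\ubar R^2$ yields module isomorphisms onto the pieces. The paper leaves the parity bookkeeping implicit, so your write-up is actually more complete; the only minor slip is that the isomorphism $\ubar R^{k-\ell}\ubar{\tilde\cH}_\ell\simeq\ubar{\tilde\cH}_\ell$ requires the \emph{injectivity} of $\ubar R^2$ (which the paper invokes explicitly), not merely its invariance.
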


\begin{proof} 
By Theorem \ref{t_CK}, the Cauchy-Kovalevskaya extension operator $CK$ is an invariant isomorphism of $\ubar\cP_{k}\oplus\ubar\cP_{k-1}\oplus\cP_{k-2}$ onto $\cP_k$ under the action of $\osp(m-1|2n)$. It is obvious that
$$\cH_k=CK(\ubar\cP_{k}\oplus\ubar\cP_{k-1}\oplus 0).$$
Therefore, as $\osp(m-1|2n)$-modules, $\cH_k$ is isomorphic to $\ubar\cP_k\oplus\ubar\cP_{k-1}$. 
Now it is sufficient to decompose $\ubar\cP_k$ and $\ubar\cP_{k-1}$ using the Fischer decomposition \eqref{eq_Fischer+}.
Since $$\ubar R^2=\sum_{j=1}^{m-1} {x^2_j}-\sum_{j=1}^{n}{\theta_{2j-1}}{\theta_{2j}}$$ is an injective $\osp(m-1|2n)$-invariant operator we have $\ubar R^{2j}\ubar{\tilde\cH}_{\ell}\simeq\ubar{\tilde\cH}_{\ell}$ and $\ubar R^{2j}\ubar{\cH}_{\ell}\simeq\ubar{\cH}_{\ell}$.
\end{proof}

\begin{thm}
Let $M\in-2\bN_0$ and $k\in I_M=[2-M/2, 2-M]\cap \bN_0$.\\
Under the action of $\osp(m-1|2n)$, we have an irreducible decomposition
$$\tilde\cH_{k}\simeq\bigoplus_{\ell=0}^{2-M-k} 2\,\ubar{\cH}_{\ell}\ \oplus
\bigoplus_{\ell=3-M-k}^k \ubar\cH_{\ell} $$
\end{thm}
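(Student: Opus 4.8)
The plan is to realize $\tilde\cH_k$ inside $\cP_k$ by means of the Cauchy--Kovalevskaya isomorphism of Theorem \ref{t_CK} and then to peel off the contribution of the non-harmonic part $\Delta Q_k$. Note first that $M\in-2\bN_0$ forces $m=M+2n$ to be even; since the reduced space $\bR^{m-1|2n}$ requires $m\geq1$, we have $m\geq2$, in particular $m\neq0$, and $\ubar M=M-1$ is odd, so $\ubar M\not\in-2\bN_0$. Thus $\bR^{m-1|2n}$ is unexceptional and both the Fischer decomposition and the branching \eqref{eq_branch} are available downstairs.

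The first step is to apply Theorem \ref{t_CK}. For $Q_k=CK(p_k,p_{k-1},P_{k-2})$ we have $\Delta Q_k=P_{k-2}$, hence $\Delta R^2\Delta Q_k=\Delta R^2 P_{k-2}$, and therefore
$$\tilde\cH_k=\Ker_k(\Delta R^2\Delta)=CK\big(\ubar\cP_k\oplus\ubar\cP_{k-1}\oplus\Ker_{k-2}(\Delta R^2)\big).$$
Since $CK$ is an $\osp(m-1|2n)$-isomorphism, this gives $\tilde\cH_k\simeq\ubar\cP_k\oplus\ubar\cP_{k-1}\oplus\Ker_{k-2}(\Delta R^2)$ as $\osp(m-1|2n)$-modules. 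The first two summands are treated exactly as in the proof of Theorem 2: the unexceptional Fischer decomposition on $\bR^{m-1|2n}$ together with injectivity of $\ubar R^2$ yields $\ubar\cP_k\oplus\ubar\cP_{k-1}\simeq\bigoplus_{\ell=0}^k\ubar\cH_\ell$.

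The heart of the argument is to decompose $W:=\Ker_{k-2}(\Delta R^2)$, and the claim is that the $\osp(m-1|2n)$-invariant operator $R^2$ restricts to an isomorphism $W\xrightarrow{\sim}\cH^0_k$, where $\cH^0_k=\cH_k\cap R^2\cP_{k-2}$ as in Theorem A. Indeed, if $P\in W$ then $R^2P\in\cH_k\cap R^2\cP_{k-2}=\cH^0_k$, so $R^2$ sends $W$ into $\cH^0_k$. Injectivity is where $m\neq0$ enters: writing $R^2=|x|^2-\omega$ with $|x|^2=\sum_{j=1}^m x_j^2$ and $\omega=\sum_{j=1}^n\theta_{2j-1}\theta_{2j}$ nilpotent, a straightforward induction on the fermionic degree (using $|x|^2\neq0$) shows that $R^2$ is injective on all of $\cP_{k-2}$, hence on $W$. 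For surjectivity I would invoke the explicit formula $\cH^0_k=R^{2k+M-2}\cH_{2-M-k}$ from Theorem A: given $\eta=R^{2k+M-2}h$ with $h\in\cH_{2-M-k}$, the polynomial $P:=R^{2k+M-4}h$ lies in $\cP_{k-2}$ (this needs $2k+M-4\geq0$, i.e.\ $k\geq2-M/2$, which is precisely the lower bound in $k\in I_M$), satisfies $R^2P=\eta\in\cH_k$, and hence $P\in W$ with $R^2P=\eta$. Thus $W\simeq\cH^0_k$.

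Finally, $R^{2k+M-2}$ is a power of the injective operator $R^2$, so it identifies $\cH^0_k$ with $\cH_{2-M-k}(\bR^{m|2n})$, whose branching is given by \eqref{eq_branch}: $W\simeq\cH^0_k\simeq\bigoplus_{\ell=0}^{2-M-k}\ubar\cH_\ell$. Substituting the two decompositions back gives
$$\tilde\cH_k\simeq\Big(\bigoplus_{\ell=0}^k\ubar\cH_\ell\Big)\oplus\Big(\bigoplus_{\ell=0}^{2-M-k}\ubar\cH_\ell\Big),$$
which is the asserted formula after collecting multiplicities: $\ubar\cH_\ell$ occurs twice for $0\leq\ell\leq2-M-k$ and once for $3-M-k\leq\ell\leq k$. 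Because each of the two bracketed terms is a genuine direct sum of irreducible $\osp(m-1|2n)$-modules, the result is automatically the claimed irreducible (in particular completely reducible) decomposition, even though $\tilde\cH_k$ is indecomposable over $\osp(m|2n)$. The step I expect to be the main obstacle is the clean identification $W\simeq\cH^0_k$: one must establish injectivity of $R^2$ (which quietly uses $m\neq0$) and match the surjectivity range against the lower bound defining $I_M$.
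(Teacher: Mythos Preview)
Your proof is correct and follows essentially the same route as the paper: identify $\tilde\cH_k$ via $CK$ with $\ubar\cP_k\oplus\ubar\cP_{k-1}\oplus\Ker_{k-2}(\Delta R^2)$, use the unexceptional Fischer decomposition downstairs (since $\ubar M=M-1$ is odd), and identify $\Ker_{k-2}(\Delta R^2)$ with $\cH_{2-M-k}$ before applying the unexceptional branching law \eqref{eq_branch}. The only cosmetic difference is that the paper quotes Theorem~A to write $\Ker_{k-2}(\Delta R^2)=R^{2k+M-4}\cH_{2-M-k}$ directly, whereas you pass through $\cH^0_k$ via the map $R^2$; both arguments hinge on the same injectivity of $R^2$ (which indeed uses $m\neq0$), so the content is the same.
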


\begin{proof} 
Since $\tilde{\cH}_k = \Ker_k (\Delta R^2 \Delta)$ it is easily seen that $$\tilde\cH_k=CK(\ubar\cP_k\oplus\ubar\cP_{k-1}\oplus\Ker_{k-2}(\Delta R^2)).$$ Moreover, by Theorem A,  we know that $$\Ker_{k-2}(\Delta R^2)=R^{2k+M-4}\cH_{2-M-k}\simeq\cH_{2-M-k}.$$
Therefore, as $\osp(m-1|2n)$-modules, $\tilde\cH_k\simeq \ubar\cP_k\oplus\ubar\cP_{k-1}\oplus\cH_{2-M-k}$. 
Now it is sufficient to use the 'non-exceptional' Fischer decomposition \eqref{e_fischer} for $\ubar\cP_k$, $\ubar\cP_{k-1}$ and the branching law \eqref{eq_branch} for $\cH_{2-M-k}$
because the superdimension $\ubar M$ of $\bR^{m-1|2n}$ satisfies $\ubar M=M-1\not\in-2\bN_0$.
\end{proof}

\section{GT bases for spherical harmonics}\label{s_GTB}

Using the branching laws described in Section \ref{s_BL} it is now easy to construct GT bases of (generalized) spherical harmonics  on $\bR^{m|2n}$ for the chain of Lie superalgebras
$$\osp(m|2n)\supset\osp(m-1|2n)\supset\cdots\supset\osp(0|2n)=\msp(2n).$$
We shall proceed by induction on $m$.

\medskip\noindent
(I) In the purely fermionic case when $m=0$, a~construction of bases for spherical harmonics $\cH_k^{f}(n)=\cH_k(\bR^{0|2n})$, $k=0,\ldots,n$ are well known. By \cite[\S\;4.2]{DES}, we have the following decomposition
\begin{equation}\label{e_branch_sympl} 
\cH_k^{f}(n)=\cH_k^{f}(n-1)\oplus\theta_{2n-1}\cH_{k-1}^{f}(n-1)\oplus\theta_{2n}\cH_{k-1}^{f}(n-1)
 \oplus\Theta_{n,k}\cH_{k-2}^{f}(n-1)
\end{equation}
with $\Theta_{n,k}={\theta_{1}}{\theta_{2}}+\cdots+{\theta_{2n-3}}{\theta_{2n-2}}+(k-n-1){\theta_{2n-1}}{\theta_{2n}}$.
Here $\cH_k^{f}(n-1)$ is the space of fermionic spherical harmonics of degree $k$ in the Grassmann variables ${\theta_{1}},\cdots,{\theta_{2n-2}}$
and $\cH_k^{f}(n-1)=0$ unless $k=0,\ldots,n-1$. 
In addition, we have that $\cH_k^{f}(1)=\Lambda^k(\theta_1,\theta_{2})$ for $k=0,1$. Hence, using \eqref{e_branch_sympl}, it is now easy to construct a basis of $\cH_k^{f}(n)$ in an iterative way.

\medskip\noindent
(II) Suppose that $m\geq 1$.
In what follows, we use shorthand notations that $\cP_k=\cP_k(\bR^{m|2n})$, $\cH_k=\cH_k(\bR^{m|2n})$, $\ubar\cP_k=\cP_k(\bR^{m-1|2n})$, $\ubar\cH_k=\cH_k(\bR^{m-1|2n})$ and
$$\ubar R^2=\sum_{j=1}^{m-1} {x^2_j}-\sum_{j=1}^{n}{\theta_{2j-1}}{\theta_{2j}}.$$

\medskip\noindent
(a) Assume first $(M-1)\not\in -2\bN_0$. Then, for a~given  $k\in\bN_0$, we have the Fischer decomposition 
$$\ubar\cP_{k}=\bigoplus_{\ell\in N_k} \ubar R^{k-\ell}\ubar\cH_{\ell},$$
see \eqref{e_fischer}.
Suppose that we have constructed  bases $\ubar\cB_{\ell}=\{h_{\ell,\nu}|\ \nu\in \ubar T_{\ell}\}$ of the spaces $\ubar\cH_{\ell}$ for all $\ell\in\bN_0$. 

\smallskip\noindent
For $\ell\in N_k$ and $\nu\in \ubar T_{\ell}$, put
\begin{equation}\label{eCKharm1}
h_{k,\ell,\nu}=CK(\ubar R^{k-\ell} h_{\ell,\nu},0,0).
\end{equation}
In particular, we have $h_{\ell,\ell,\nu}=h_{\ell,\nu}$.

\smallskip\noindent
For $\ell\in N_{k-1}$ and $\nu\in \ubar T_{\ell}$, put
\begin{equation}\label{eCKharm2}
h_{k,\ell,\nu}=CK(0,\ubar R^{k-1-\ell} h_{\ell,\nu},0).
\end{equation}
Then, by Theorem \ref{t_CK}, it is easy to see that the polynomials \eqref{eCKharm1} and \eqref{eCKharm2} form
a~basis of the space $\cH_{k}$. 

\smallskip\noindent
Furthermore, if $M\in -2\bN_0$ and $k\in I_M$, then put, for $\ell=0,\ldots,2-M-k$ and $\nu\in \ubar T_{\ell}$,
\begin{equation}\label{eCKharm2+}
\tilde h_{k,\ell,\nu}=CK(0,0,R^{2k+M-4} h_{2-M-k,\ell,\nu}).
\end{equation}
Obviously, by Theorem \ref{t_CK}, the polynomials \eqref{eCKharm1}, \eqref{eCKharm2} and \eqref{eCKharm2+} form a~basis
of the space $\tilde\cH_{k}$.

\medskip\noindent
(b) Assume that $(M-1)\in -2\bN_0$. Then, for a~given  $k\in\bN_0$, we have
$$\ubar\cP_{k}=\bigoplus_{\ell\in \ubar{\tilde J}_k} \ubar R^{k-\ell}\ubar{\tilde\cH}_{\ell} \oplus
\bigoplus_{\ell\in\ubar J_k} \ubar R^{k-\ell}\ubar\cH_{\ell}$$ 
where $\ubar{\tilde J}_k=N_k\cap I_{M-1}$, $\ubar J_k=N_k\setminus(\ubar{\tilde J_k}\cup \ubar J_k^0)$ with $\ubar J_k^0=\{3-M-\ell|\ \ell\in\ubar{\tilde J_k}\}$, see Theorem B. 
Assume that we have constructed  bases $\ubar\cB_{\ell}=\{h_{\ell,\nu}|\ \nu\in \ubar T_{\ell}\}$ of the spaces $\ubar\cH_{\ell}$ for all $\ell\in\bN_0\setminus I_{M-1}$
and, for all $\ell\in I_{M-1}$,  bases $$\ubar{\tilde\cB}_{\ell}=\{\tilde h_{\ell,\nu}|\ \nu\in \tilde{\ubar T}_{\ell}\}$$ of the spaces $\ubar{\tilde\cH_{\ell}}$. 

\smallskip\noindent
For $\ell\in J_k$ and $\nu\in \ubar T_{\ell}$, put
\begin{equation}\label{eCKharm3}
h_{k,\ell,\nu}=CK(\ubar R^{k-\ell} h_{\ell,\nu},0,0).
\end{equation}

\smallskip\noindent
For $\ell\in J_{k-1}$ and $\nu\in \ubar T_{\ell}$, put
\begin{equation}\label{eCKharm4}
h_{k,\ell,\nu}=CK(0,\ubar R^{k-1-\ell} h_{\ell,\nu},0).
\end{equation}

\smallskip\noindent
For $\ell\in\tilde J_k$ and $\nu\in \tilde{\ubar T}_{\ell}$, put
\begin{equation}\label{eCKharm5}
\tilde h_{k,\ell,\nu}=CK(\ubar R^{k-\ell} \tilde h_{\ell,\nu},0,0).
\end{equation}

\smallskip\noindent
For $\ell\in \tilde J_{k-1}$ and $\nu\in \tilde{\ubar T}_{\ell}$, put
\begin{equation}\label{eCKharm6}
\tilde h_{k,\ell,\nu}=CK(0,\ubar R^{k-1-\ell} \tilde h_{\ell,\nu},0).
\end{equation}
Obviously, by Theorem \ref{t_CK}, the polynomials \eqref{eCKharm3}, \eqref{eCKharm4}, \eqref{eCKharm5} and \eqref{eCKharm6} form a~basis of the space $\cH_k$.

We plan to express elements of the GT bases in terms of classical special polynomials and investigate properties of the GT bases  in the next paper.

\subsection*{Acknowledgments}

The author is very grateful for useful advise and suggestions from Vladim\'ir Sou\v cek.
The support of the grant GACR 20-11473S is gratefully acknowledged.

\end{document}